\theoremstyle{plain}
\newtheorem{teor}{Theorem}[section]
\newtheorem{seur}[teor]{Corollary}
\newtheorem{lemma}[teor]{Lemma}
\theoremstyle{definition}
\theoremstyle{remark}
\numberwithin{equation}{section} 
\DeclareMathOperator{\R}{\mathbbm{R}}
\DeclareMathOperator{\Z}{\mathbbm{Z}}
\DeclareMathOperator{\N}{\mathbbm{N}}
\DeclareMathOperator{\WC}{\mathcal{B}}
\DeclareMathOperator{\diam}{diam}
\DeclareMathOperator{\dist}{dist}
\newcommand{\dr}{d_{\rho}}
\newcommand{\de}{d_{\epsilon}}
\newcommand{\mue}{\mu_{\epsilon}}
\newcommand{\length}{\text{length}}
\newenvironment{proof1}{\textit{Proof of Theorem 1.2.}~}{\hfill$\Box$}
\begin{document}

\title[Gromov hyperbolicity and quasihyperbolic geodesics]{Gromov hyperbolicity and quasihyperbolic geodesics}

\date{\today}
\subjclass[2000]{Primary 30C65.}
\author[P. Koskela, P. Lammi and V. Manojlovi\'c]{Pekka Koskela, P\"aivi Lammi and \\Vesna Manojlovi\'c}
\thanks{The first author was supported  by the Academy of Finland grant 131477.}
\address{Department of Mathematics and Statistics\\
P.O.Box 35 (MaD)\\ 
FI--40014 University of Jyväskylä\\
Finland}
\email{pekka.j.koskela@jyu.fi, paivi.e.lammi@jyu.fi}
\address{University of Belgrade\\ 
Faculty of Organizational Sciences\\
Jove Ilica 154, Belgrade\\ 
Serbia} 
\email{vesnam@fon.bg.ac.rs}
\begin{abstract}
We characterize Gromov hyperbolicity of the quasihyperbolic metric space $(\Omega,k)$ by geometric properties of the Ahlfors regular length metric measure space $(\Omega,d,\mu).$ The characterizing properties are called the Gehring--Hayman condition and the ball--separation condition.
\end{abstract} 

\maketitle

\section{Introduction}

Given a proper subdomain $\Omega$ of the Euclidean space $\R^n,$ $n\ge 2,$
equipped with the usual Euclidean distance, 
one defines the \textit{quasihyperbolic metric} $k$
in $\Omega$ as the path metric generated
by the density  
\[\rho(z)=\frac{1}{d(z)},\] 
where $d(z)=\dist(z,\partial \Omega).$ Precisely, one sets
\[ k(x,y)=
\inf_{\gamma_{xy}}\int_ {\gamma_{xy}}\rho(z)\, ds,
\]
where the infimum is taken over all rectifiable curves $\gamma_{xy}$ that
join $x$ and $y$ in $\Omega$ and the integral is the usual line integral.
Then $\Omega$ equipped with $k$ is a geodesic: there is a curve $\gamma_{xy}$
whose length in the above sense equals $k(x,y).$  Let us denote by $[x,y]$ any
such geodesic; these geodesics are not necessarily unique as can be easily
seen, for example for $\Omega=\R^n\setminus\{0\}.$
The quasihyperbolic metric $k$ was introduced in \cite{GP} and \cite{GO}
where the basic properties of it were established.

If for all triples of geodesics $[x,y],\, [y,z],\, [z,x]$ in $\Omega$ every point in 
$[x,y]$ is within 
$k$--distance $\delta$ from $[y,z] \cup [z,x],$ the space $(\Omega,k)$ is called 
\textit{$\delta$--hyperbolic.} Roughly speaking, this means that 
\textit{geodesic triangles} in $\Omega$ are \textit{$\delta$--thin.} 
Moreover, we say that ($\Omega,k)$ is \textit{Gromov hyperbolic} if it is 
$\delta$--hyperbolic for some $\delta.$ 
The following theorem from \cite{BB} that extends results from \cite{BHK} gives a complete 
characterization of Gromov hyperbolicity of $(\Omega,k).$

\begin{teor}\label{bbgromov}
Let $\Omega\subset\R^n$ be a proper subdomain. Then 
$(\Omega,k)$ is Gromov hyperbolic if and only if $\Omega$ satisfies both a
Gehring--Hayman condition and a ball separation condition.
\end{teor}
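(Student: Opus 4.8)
The plan is to establish the two implications separately, using throughout the standard comparison estimates linking $k$ to the Euclidean metric: the one-Lipschitz bound $|\log(d(u)/d(v))|\le k(u,v)$, the lower estimate $k(u,v)\ge\log(1+|u-v|/\min(d(u),d(v)))$, and the matching upper estimate $k(u,v)\le 2|u-v|/d(u)$ valid whenever $|u-v|\le\tfrac12 d(u)$ (so that the segment $[u,v]$ lies in $\Omega$ and stays comparably deep). These let one pass between the Euclidean scale $d(z)$ and the quasihyperbolic scale. Recall that the Gehring--Hayman condition asks for a constant $C_1$ with $\ell_d([x,y])\le C_1\,\ell_d(\gamma)$ for every geodesic $[x,y]$ and every curve $\gamma$ joining $x$ to $y$, where $\ell_d$ denotes Euclidean length, and the ball separation condition asks for a constant $C_2$ so that every such $\gamma$ meets the Euclidean ball $B(z,C_2\,d(z))$ for each $z\in[x,y]$.

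For the implication $(\Leftarrow)$, assume both conditions and verify that geodesic triangles are $\delta$-thin. Fix a triangle with sides $[x,y]$, $[y,z]$, $[z,x]$ and a point $w\in[x,y]$; the goal is a uniform bound on the $k$-distance from $w$ to $[y,z]\cup[z,x]$. Applying ball separation to the competitor $\gamma=[x,z]\cup[z,y]$ produces a point $w'$ on these two sides with $|w-w'|\le C_2\,d(w)$. One then upgrades this Euclidean proximity to $k$-proximity: using that $d$ is one-Lipschitz together with the cone-arc control of $d$ along the sides supplied by Gehring--Hayman, one checks that $w'$ is comparably deep, $d(w')\approx d(w)$, and the comparison estimates above then give $k(w,w')\lesssim 1$. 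Hence $w$ lies in a bounded $k$-neighbourhood of the other two sides, which is precisely $\delta$-thinness.

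The cone-arc analysis underlying the previous step is the technical core of this direction. Parametrizing a $k$-geodesic $\gamma$ by $k$-arclength on $[0,\ell]$, $\ell=k(x,y)$, and using that $\log d(\gamma(t))$ is one-Lipschitz, one obtains $d(\gamma(t))\le\min(d(x)e^{t},d(y)e^{\ell-t})$, so $d$ rises and then falls, attaining a maximum at an apex $z_0$ with $\ell_d([x,y])\approx d(z_0)$ by geometric summation. The Gehring--Hayman condition forces this geodesic to be a quasi-shortest curve in the Euclidean length metric and pins down its shape as a cone arc anchored near $z_0$. I expect the main obstacle to lie here: extracting from the two hypotheses a scale-invariant geometric description of geodesics and then assembling the three sides so that the neighbourhood estimate is genuinely uniform rather than merely finite, in particular controlling the depth of the separating point $w'$ independently of position along the triangle.

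For the implication $(\Rightarrow)$, assume $(\Omega,k)$ is $\delta$-hyperbolic. The key external input is the stability of quasigeodesics in Gromov hyperbolic spaces: every $(\lambda,c)$-quasigeodesic lies in a bounded $k$-neighbourhood of a geodesic with the same endpoints, with the bound depending only on $\delta,\lambda,c$. To obtain Gehring--Hayman, one shows that a curve which is efficient in the Euclidean length sense becomes, after Euclidean-arclength reparametrization, a $k$-quasigeodesic; stability then confines the $k$-geodesic $[x,y]$ to a bounded $k$-neighbourhood of it, and the apex estimate converts this $k$-confinement into the length comparison $\ell_d([x,y])\le C_1\,\ell_d(\gamma)$. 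To obtain ball separation one argues that points of $k$-geodesics sit in Euclidean bottlenecks: if some curve $\gamma$ avoided $B(w,C\,d(w))$ for every large $C$, the conformal factor $1/d$ would turn the forced Euclidean detour around $w$ into a definite $k$-gap, and pairing this detour with the subarcs $[x,w]$ and $[w,y]$ would violate $\delta$-thinness. Verifying the quasigeodesic property of efficient curves, and making the bottleneck argument quantitative, are the delicate points of this direction, and both again rest on combining the logarithmic comparison estimates with the one-Lipschitz control of $d$.
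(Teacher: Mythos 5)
This theorem is not proved in the paper at all: it is quoted from Balogh--Buckley \cite{BB} (building on \cite{BHK}), and the paper's own contribution (Lemma 4.1 and Theorem 5.1) is precisely a new proof of the hard half --- the \emph{necessity} of (GH) and (BS) --- in a more general setting. So the relevant comparison is with the proofs in \cite{BB} and \cite{BHK}, and against those your sketch has genuine gaps in both directions. The fatal one is in your $(\Rightarrow)$ direction: the claim that a Euclidean-length-efficient curve, reparametrized, is a $k$-quasigeodesic is false with uniform constants even in the unit disk. Take $x,y$ at depth $t$ on either side of a boundary point with $|x-y|\approx\sqrt{t}$: the Euclidean segment (the most efficient competitor) has quasihyperbolic length of order $t^{-1/2}$, while $k(x,y)\approx\log(1/t)$, so quasigeodesic stability cannot be applied to it. There is no elementary repair; the necessity of (GH) is exactly the Gehring--Hayman theorem, and every known proof (\cite{BKR}, \cite{BHK}, \cite{BB}, and the present paper) goes through the conformal deformation $\rho_\epsilon=e^{-\epsilon k(w,\cdot)}$ to a bounded uniform space plus an analytic ingredient --- a conformal-modulus/Loewner estimate in $\R^n$ in \cite{BB}, or the Whitney-covering, Frostman-measure and H\"older-inequality argument of Section 4 here --- which your sketch omits entirely. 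The same applies to your ``bottleneck'' argument for (BS): a curve avoiding $B(w,Cd(w))$ need not incur any quasihyperbolic cost nor stay $k$-far from $w$, since the detour may run through points of large $d$; this is again handled by the measure-theoretic Lemma 4.1 (resp.\ the modulus argument of \cite{BB}), not by a direct thinness violation.

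Your $(\Leftarrow)$ direction also has the gap you yourself flag, and it is not merely technical. Ball separation produces $w'\in[y,z]\cup[z,x]$ with $|w-w'|\le C_{\text{bs}}d(w)$, but since $C_{\text{bs}}\ge 1$ this bounds neither $d(w')$ from below nor, even when $d(w')\approx d(w)$, the distance $k(w,w')$: two points at comparable depth and comparable Euclidean separation can be separated by a slit and be arbitrarily far apart in $k$. Consequently ``Euclidean proximity at scale $d(w)$'' cannot be upgraded to ``bounded $k$-distance'' by the comparison estimates alone, and direct verification of $\delta$-thinness along these lines does not close. The actual sufficiency proof in \cite{BB} and \cite[\S 7]{BHK} avoids this entirely: one shows that (GH) and (BS) together force $k$ to agree, up to additive constants, with an explicit metric of the form $2\log\bigl(\sigma(x,y)/\sqrt{d(x)d(y)}\bigr)$ built from the inner metric, whose Gromov hyperbolicity is known from the hyperbolic-cone construction, and then one invokes the stability of hyperbolicity under rough isometries of roughly geodesic spaces. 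You would need either to adopt that route or to supply a substitute for the missing depth-and-connectivity control of $w'$.
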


Above, the Gehring--Hayman condition means that there is a constant $C_{\text{gh}}\ge 1$ 
such that 
for each pair of points $x, y$ in $\Omega$ and for each quasihyperbolic geodesic $[x,y]$ it 
holds that 
\[\length([x,y])\leq C_{\text{gh}}\length(\gamma_{xy}),\]
where $\gamma_{xy}$ is any other curve joining $x$ to $y$ in $\Omega.$ 
In other words, it says that quasihyperbolic geodesics are essentially the 
shortest curves in $\Omega.$ The other condition, a ball separation condition, 
requires the existence of a constant $C_{\text{bs}} \geq 1$ such that for each pair of 
points $x$ and $y,$ for each quasihyperbolic geodesic $[x,y],$ for every $z\in [x,y],$ 
and for every curve $\gamma_{xy}$ joining $x$ to $y$ it holds that
\[B(z,C_{\text{bs}}d(z))\cap \gamma_{xy} \neq \emptyset.\]

Notice that the three conditions in Theorem \ref{bbgromov}, Gromov hyperbolicity and 
the Gehring--Hayman and the ball separation conditions, are only based on metric concepts. 
It is then natural to ask for an extension of
this characterization to an abstract metric setting. Such an extension was
given in \cite{BB}, relying on an analytic assumption that essentially requires
that the space in question supports a suitable Poincar\'e inequality. This very
same condition, expressed in terms of moduli of curve families \cite{HeiK},
is already in force in \cite{BHK}.

The purpose of this paper is to show that Poincar\'e inequalities are not
critical for geometric characterizations of Gromov hyperbolicity of a 
non--complete metric space, equipped with the quasihyperbolic metric.
Our main result reads as follows.

\begin{teor}\label{main}
Let $Q>1$ and let $(X,d,\mu)$ be a $Q$--regular metric measure space with 
$(X,d)$ a locally compact and annularly quasiconvex length space. Let $\Omega$ be a bounded 
and proper subdomain of $X,$ and let $d_\Omega$ be the inner metric on 
$\Omega$ associated to $d.$ Then
$(\Omega,k)$ is Gromov hyperbolic if and only if $(\Omega,d_\Omega)$ 
satisfies both a Gehring--Hayman condition and a ball separation condition.
\end{teor}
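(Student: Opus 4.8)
The plan is to follow the proof of the abstract characterization in \cite{BB} (which extends \cite{BHK}) and to isolate the one place where the Poincar\'e inequality enters, replacing it by a purely geometric argument based on annular quasiconvexity and $Q$-regularity. First I would record the comparison estimates that hold in full generality. Local compactness together with the length property of $(X,d)$ makes $(\Omega,k)$ a geodesic space, so quasihyperbolic geodesics $[x,y]$ exist. The standard density computation gives the lower bound
\[ k(x,y)\ \ge\ \log\!\Big(1+\frac{d_\Omega(x,y)}{\min\{d(x),d(y)\}}\Big), \]
together with the matching upper bound $k(x,y)\le C\,d_\Omega(x,y)/d(x)$ whenever $d_\Omega(x,y)\le d(x)/2$. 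These furnish the dictionary that converts $k$--statements (thin triangles, Gromov products) into $d_\Omega$--statements (the Gehring--Hayman and ball separation conditions) and back: a $k$--ball of bounded radius about $z$ is trapped between $d$--balls of radii comparable to $d(z)$.

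For the forward direction I would assume that $(\Omega,k)$ is $\delta$--hyperbolic and deduce the two conditions. Thin triangles first give that the geodesic $[x,y]$ is $k$--close to the sides joining its endpoints to any third point; to upgrade this to an arbitrary competitor curve $\gamma_{xy}$, as the two conditions demand, I would approximate $\gamma_{xy}$ by geodesics on a dyadic subdivision and iterate the thin--triangle inequality, using annular quasiconvexity to supply the short transversal curves that connect $[x,y]$ to $\gamma_{xy}$ at each scale. The dictionary then turns the resulting $k$--closeness of every $z\in[x,y]$ to $\gamma_{xy}$ into the inclusion $B(z,C_{\text{bs}}d(z))\cap\gamma_{xy}\neq\emptyset$, and integrating the density $1/d$ along $[x,y]$ against this closeness yields the Gehring--Hayman length comparison.

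The converse, where the Gehring--Hayman and ball separation conditions are assumed and uniform thinness of geodesic triangles must be produced, is where \cite{BB} invoked the Poincar\'e inequality. There the required short connecting curves between a geodesic and a competitor, and the attendant length bounds, were obtained from modulus estimates; I would instead produce them directly. Given two points of $[x,y]$ at comparable boundary distance $t$, annular quasiconvexity joins them by a curve lying in an annulus of radii comparable to $t$, of $d$--length $\lesssim t$ and hence of $k$--length $O(1)$. Iterating this over a dyadic sequence of scales between a point $z\in[x,y]$ and the boundary, and using $Q$--regularity to bound the number of balls of a fixed scale meeting a given ball, I would build the comparison curves that, fed together with the two conditions into the Gromov--product estimate, verify the $\delta$--thin--triangle condition.

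The hard part will be exactly this reconstruction. The Poincar\'e inequality delivered modulus, hence connectivity and length, lower bounds at all scales at once, whereas annular quasiconvexity is a single--scale statement; the difficulty is to chain the single--scale annular curves across the full range of dyadic scales from $d(z)$ up to $\diam(\Omega)$ while keeping the accumulated $k$--length and all geometric constants under control. This calls for a telescoping argument in which $Q$--regularity counts the relevant annuli and keeps the resulting geometric series summable, and one must track how the constants $C_{\text{gh}}$ and $C_{\text{bs}}$ enter the final hyperbolicity constant $\delta$ quantitatively rather than letting them degrade through the iteration.
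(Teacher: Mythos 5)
There is a genuine gap, and it starts with a misreading of where the difficulty lies. You assert that the converse direction (GH $+$ BS $\Rightarrow$ Gromov hyperbolicity) is where \cite{BB} invoked the Poincar\'e inequality and that this is the part needing reconstruction. It is the opposite: Balogh and Buckley prove that implication for \emph{general} minimally nice length spaces with no analytic hypotheses whatsoever (\cite[Theorems 2.4 and 6.1]{BB}), and the paper simply quotes it. The Poincar\'e inequality in \cite{BB} (and the modulus estimates in \cite{BHK}) enter in the direction you treat as the easy ``forward'' one, namely showing that Gromov hyperbolicity of $(\Omega,k)$ \emph{forces} the Gehring--Hayman and ball separation conditions. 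That is the entire content of the paper, and your sketch of it cannot work as stated.

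Concretely, your forward-direction argument never uses the measure $\mu$ at all, yet the implication ``Gromov hyperbolic $\Rightarrow$ GH'' is false without a regularity assumption with exponent $Q>1$; thin triangles and a $k$-versus-$d$ ``dictionary'' only control quasihyperbolic geodesics and quasigeodesics, whereas the competitor $\gamma_{xy}$ in both (GH) and (BS) is an \emph{arbitrary} curve, which no amount of iterated thin-triangle inequalities will bring $k$-close to $[x,y]$. The paper's route is: uniformize $(X,d)$ via the Bonk--Heinonen--Koskela density $\rho_\epsilon(z)=\exp\{-\epsilon k_d(w,z)\}$ (using rough starlikeness, which is where annular quasiconvexity and the boundedness of $\Omega$ enter, via \cite[Theorem 3.1]{BB}); view $(X,d)$ as a conformal deformation of the uniform space $(X,d_\epsilon)$ by the density $\tilde\rho=1/\sigma_\epsilon$, verify the Harnack and volume growth conditions, and invoke the Gehring--Hayman theorem for conformal deformations from \cite{KL} to get (GH); and prove (BS) by the main technical lemma, whose proof packs a Whitney decomposition against a Frostman measure on $\gamma$ and splits the resulting sum by H\"older's inequality with exponents $Q$ and $Q/(Q-1)$ --- this is exactly where $Q$-regularity replaces the Poincar\'e inequality, and it is the ingredient your proposal is missing. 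Your plan to use annular quasiconvexity to build single-scale connecting curves is aimed at a direction that needs no new proof, while the direction that does need one is left without its essential measure-theoretic mechanism.
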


Above, annularly quasiconvexity means that there is a constant 
$\lambda \ge 1$ so that, for any $x\in X$ and all $0<r'<r,$ each pair of points $y,z$ in
$B(x,r)\setminus B(x,r')$ can be joined with a path $\gamma_{yz}$ in
$B(x,\lambda r)\setminus B(x,r'/\lambda)$ such that 
$\length(\gamma_{yz})\le \lambda d(y,z),$
$Q$--regularity requires the existence of a constant $C_{\text{q}}$ so that 
$$r^Q/C_{\text{q}}\le \mu(B(x,r))\le C_{\text{q}}r^Q,$$
for all $r>0$ and all $x\in X,$ and the other concepts are defined analogously 
to the Euclidean
setting described in the beginning of our introduction. See Section 2 for the 
precise definitions. In fact, the assumptions of Theorem \ref{main} can be 
somewhat relaxed, see Section 5.

The main point in Theorem \ref{main} is the necessity of the Gehring-Hayman 
and ball separation conditions; their sufficiency is already given in \cite{BB}.

This paper is organized as follows. Section 2 contains necessary definitions. In Section
3 we give preliminaries related the quasihyperbolic metric and Whitney balls.
Section 4 is devoted to the proof of our main technical estimate, and Section 5 contains
the proof of our main result and some generalizations.

\section{Definitions}

Let $(X,d)$ be a metric space. A \textit{curve} means a continuous map 
$\gamma \colon [a,b] \to X$ from an interval $[a,b]\subset \R$ to $X.$ 
We also denote the image set $\gamma([a,b])$ of $\gamma$ by $\gamma.$ 
The \textit{length} 
$\ell_d(\gamma)$ of $\gamma$ with respect to the metric $d$ is defined as
\[\ell_d(\gamma)=\sup \sum_{i=0}^{m-1}d(\gamma(t_i),\gamma(t_{i+1})),\]
where the supremum is taken over all partitions $a=t_0<t_1<\cdots<t_m=b$ 
of the interval $[a,b].$ If $\ell_d(\gamma)<\infty,$ 
then $\gamma$ is said to be a \textit{rectifiable curve.} 
When the parameter interval is open or half--open, we set  
\[\ell_d(\gamma)=\sup\ell_d(\gamma|_{[c,d]}),\] 
where the supremum is taken over all compact subintervals $[c,d].$ 

When every pair of points in $(X,d)$ can be joined with a rectifiable curve, 
the space $(X,d)$ is called \textit{rectifiably connected.} 
If $\ell_d(\gamma_{xy})= d(x,y)$ for some curve $\gamma_{xy}$ 
joining points $x,y \in X,$ then $\gamma_{xy}$ is said to be a \textit{geodesic.} 
If every pair of points in $(X,d)$ can be joined with a geodesic, 
then $(X,d)$ is called a \textit{geodesic space.} Moreover, a \textit{geodesic ray} in $X$ is an 
isometric image in $(X,d)$ of the interval $[0,\infty).$ Furthermore,
for a rectifiable curve $\gamma$ we define the \textit{arc length} 
$s \colon [a,b]\to [0,\infty)$ along $\gamma$ by
\[s(t)=\ell_d(\gamma|_{[a,t]}).\]

Let $(X,d)$ be a geodesic metric space and let $\delta \geq 0$. 
Denote by $[x,y]$ any geodesic joining two points $x$ and $y$ in $X.$ 
If for all triples of 
geodesics $[x,y],\, [y,z],\, [z,x]$ in $X$ every point in $[x,y]$ is within 
distance $\delta$ from $[y,z] \cup [z,x],$ the space $(X,d)$ is called 
\textit{$\delta$--hyperbolic.} In other words, 
\textit{geodesic triangles} in $X$ are \textit{$\delta$--thin.} 
Moreover, we say that a space is 
\textit{Gromov hyperbolic} if it is $\delta$--hyperbolic for some $\delta.$ 
All Gromov hyperbolic spaces in this paper are assumed to be unbounded.

Next, let $(X,d)$ be a locally compact, rectifiably connected and 
non--complete metric space, and denote by $\overline{X}_d$ its metric 
completion. 
Then the \textit{boundary} 
$\partial_d X \mathrel{\mathop:}=\overline{X}_d\setminus X$ is nonempty. 
We write 
\[d(z)\mathrel{\mathop:}=\dist_d(z,\partial_d X)=\inf\{d(z,x) : x \in \partial_d X\}\] 
for $z\in X.$ Given a real number $D \geq 1,$ a curve $\gamma \colon [a,b]\to X$ is 
called a \textit{$D$--quasiconvex curve} if 
\[\ell_d(\gamma)\leq Dd(\gamma(a),\gamma(b)). \]
If $\gamma$ also satisfies  the \textit{cigar condition}
\[\min\{\ell_d(\gamma|_{[a,t]}),\ell_d(\gamma|_{[t,b]})\}\leq D d(\gamma(t))\]
for every $t\in [a,b],$ the curve is called a \textit{$D$--uniform curve.} 
A metric space $(X,d)$ is called a \textit{$D$--quasiconvex space} or 
\textit{$D$--uniform space} if every pair of points in it can be joined 
with a $D$--quasiconvex curve or a $D$--uniform curve respectively. 

Let $\rho \colon X \to (0,\infty)$ be a continuous function. For each rectifiable curve $\gamma \colon [a,b]\to X$ we define the \textit{$\rho$--length} $\ell_\rho(\gamma)$ of  $\gamma$ by 
\[\ell_\rho(\gamma)=\int_\gamma \rho\, ds = \int_a^b \rho(\gamma(t))\, ds(t).\]
Because $(X,d)$ is rectifiably connected, the density $\rho$ determines a 
metric $\dr$, called a \textit{$\rho$--metric,}
\[\dr(x,y)=\inf_{\gamma_{xy}} \ell_\rho(\gamma_{xy}),\]
where the infimum is taken over all rectifiable curves $\gamma_{xy}$ 
joining $x,y \in X.$  If $\rho\equiv 1,$ then $\ell_\rho(\gamma)=\ell_d(\gamma)$ is the length of 
the curve $\gamma$ 
with respect to the metric $d,$ and the metric $\dr=\ell_d$ is the 
\textit{inner metric associated with $d.$} 
Generally, if the distance between every pair of points in the
metric space is the infimum of the lengths of all curves joining the points, 
then the metric space is called a \textit{length space.}

If we choose \[\rho(z)=\frac{1}{d(z)},\] we obtain the 
\textit{quasihyperbolic metric} in $X.$ 
In this special case, we denote the metric $\dr$ by $k_d$ and the 
quasihyperbolic length of the curve $\gamma$ by $\ell_{k_d}(\gamma).$  
Moreover, $[x,y]_{k_d}$ refers to a quasihyperbolic geodesic joining points 
$x$ and $y$ in $X.$ Because we are dealing with many different metrics, 
the usual metric notations will have an additional subscript that refers to 
the metric in use. For ease of notation, terms which refer to the 
metric $\dr$ will have an additional subscript $\rho$ instead of $\dr.$

We say that $(X,d)$ satisfies a \textit{ball separation condition} if there is 
a constant $C_{\text{bs}}\geq 1$ such that for each pair of points $x,y\in X,$ 
for every quasihyperbolic geodesic $[x,y]_{k_d}\subset X,$ for every 
$z\in [x,y]_{k_d},$ and for every curve $\gamma_{xy}$ joining points $x$ 
and $y,$ it holds that
\begin{equation}
B_d(z,C_{\text{bs}}d(z))\cap \gamma_{xy} \neq \emptyset. \tag*{(BS)}
\end{equation} 
Thus the condition says that the ball $B_d(z,C_{\text{bs}}d(z))$ either 
includes at least one of the endpoints of the quasihyperbolic geodesic or it 
separates the endpoints. This condition was introduced in \cite[\S 7]{BHK}. 
We also say that $(X,d)$ satisfies the \textit{Gehring--Hayman condition} if 
there is a constant $C_{\text{gh}}\geq 1$ such that for every $[x,y]_{k_d}$  
it holds that
\begin{equation}
\ell_d([x,y]_{k_d})\leq C_{\text{gh}} \ell_d(\gamma_{xy}), \tag*{(GH)}
\end{equation}
where $\gamma_{xy}$ is any other curve joining $x$ to $y$ in $X.$

A metric space $(X,d)$ is called \textit{minimally nice} if $(X,d)$ is a 
locally compact, rectifiable connected and non--complete metric space, and 
the identity map from $(X,d)$ to $(X,\ell_d)$ is continuous. If $(X,d)$ is 
minimally nice, then the identity map from $(X,d)$ to $(X,k_d)$ is a 
homeomorphism, and $(X,k_d)$ is complete, proper 
(i.e.\,closed balls are compact) and geodesic (cf.\ \cite[Theorem 2.8]{BHK}). 
Furthermore, we define a proper, geodesic space $(X,k_d)$ to be \textit{$K$--roughly starlike,} $K>0,$ with respect to a base point $w\in X,$ 
if for every point $x\in X$ there exists some geodesic ray emanating from $w$ whose distance to $x$ is at most $K.$

Let $\mu$ be a Borel regular measure on $(X,d)$ with dense support.
We call the density $\rho$ a \textit{conformal density} provided it satisfies both a \textit{Harnack inequality} HI($A$) for some constant $A\geq 1:$
\begin{align}
\frac{1}{A}\leq \frac{\rho(x)}{\rho(y)}\leq A \qquad \text{for all } x,y \in B_d(z,\frac{1}{2}d(z)) \text{ and all } z\in X, \tag*{HI($A$)}
\end{align}
and a \textit{volume growth condition} VG($B$) for some constant $B>0:$
\begin{align}
\mu_{\rho}(B_{\rho}(z,r))\leq Br^Q \qquad \text{for all }  z\in X \text{ and } r>0 \tag*{VG($B$)}.
\end{align}
Here $\mu_{\rho}$ is the Borel measure on $X$ defined by
\begin{align*}
\mu_{\rho}(E)=\int_E \rho^Q\, d\mu \qquad \text{for a Borel set } E\subset X,
\end{align*}
and $Q$ is a positive real number. Generally $Q$ will be the Hausdorff 
dimension of our space $(X,d).$ There is nothing special 
about the constant $\frac12$ in condition HI($A$): we may replace it by any 
constant $0 < c \le \frac12,$ actually by any constant $c\in (0,1).$ 
Suppose that we have fixed $0< c \le \frac12.$ Then each ball $B_d(z,c d(z))$ 
is called a \textit{Whitney type ball.} 

In general, we say that $(X,d,\mu)$ is \textit{$Q$--upper regular} for some 
$Q>0$ if there is a constat $C_\text{u} \geq 1$ such that
\begin{equation}\label{upper regular}
\mu(B_d(z,r)) \leq C_\text{u} r^Q
\end{equation}
for each $z \in X$ and every $r>0.$ We also say that $(X,d,\mu)$ is 
\textit{$Q$--regular on Whitney type balls} for some $Q>0$ if there are constants
$C_\text{w}\geq 1$ and $0<\epsilon \le 1$ such that
\begin{equation} \label{locally regular}
C_\text{w}^{-1}r^Q\leq\mu(B_d(z,r))\leq C_\text{w} r^Q
\end{equation}
for each $z\in X$ and every $r \leq \epsilon d(z)/2.$ 

\section{The metric measure space $(X,\de,\mue)$}

Let $(X,d,\mu)$ be a minimally nice metric measure space so that the measure 
$\mu$ is Borel regular and $(X,k_d)$ is Gromov hyperbolic. Let $w\in X$ be a 
base point. We define two deformations by setting 
\[\rho_\epsilon(z)=\exp\{-\epsilon k_d(w,z)\} \quad \text{ and } \quad \sigma_\epsilon(z)=\frac{\rho_\epsilon(z)}{d(z)}\] 
for given $\epsilon >0.$ This generates a metric space $(X,\de)$ with
\[\de(x,y)= \inf_{\gamma_{xy}} \ell_\epsilon (\gamma_{xy})=\inf_{\gamma_{xy}} \int_{\gamma_{xy}} \rho_\epsilon(z)\, d_{k_d}s=\inf_{\gamma_{xy}}\int_{\gamma_{xy}} \sigma_{\epsilon}(z)\, ds, \]
where the infimum is taken over all curves $\gamma_{xy}$ joining points $x$ and $y$ in $X,$ 
and $d_{k_d}s=ds/d(z)$ denotes the quasihyperbolic distance element. 
For ease of notation we write $d_\epsilon$ instead of $d_{\sigma_\epsilon}.$ 
We also refer to the metric $d_\epsilon$ via an additional subscript $\epsilon,$ for instance 
$\overline{X}_\epsilon$ denotes the metric completion of $(X,\de).$ If $Q>0$ is fixed, we also attach the Borel measure $d\mue(z)=\sigma_\epsilon(z)^Qd\mu(z)$ to $(X,d_\epsilon).$

Because $(X,k_d)$ is geodesic, for given $x\in X$ and a geodesic $[w,x]_{k_d}$ we have that
\[d_\epsilon(w,x)\leq \int_{[w,x]_{k_d}}\rho_\epsilon\, d_{k_d}s \leq \int_0^\infty e^{-\epsilon t}\, dt = \frac{1}{\epsilon}.\]
Thus $(X,d_\epsilon)$ is always bounded.
Moreover, by the triangle inequality the density $\rho_\epsilon$ satisfies a  
Harnack type inequality:
\begin{equation}
\exp\{-\epsilon k_d(x,y)\} \leq \frac{\rho_\epsilon(x)}{\rho_\epsilon(y)}\leq \exp\{\epsilon k_d(x,y)\}
\end{equation}
for all $x,y \in X$ and all $\epsilon >0.$  We also obtain that the density $\sigma_\epsilon$ satisfies the Harnack inequality HI($A$) with the constant $A=3\exp\{2\epsilon\}:$
\begin{equation*}
\begin{split}
\frac{1}{3}\exp\{-2\epsilon\}\leq \frac{1}{3}\exp\{-\epsilon k_d(x,y)\}&\leq \frac{\sigma_\epsilon(x)}{\sigma_\epsilon(y)}\\
&\leq 3\exp\{\epsilon k_d(x,y)\}\leq 3\exp\{2\epsilon\} 
\end{split}
\end{equation*}
for all $x,y \in B_d(z,\frac{1}{2}d(z))$ and for every $z\in X.$

Bonk, Heinonen and Koskela proved in \cite[\S 4 and Theorem 5.1]{BHK} that there is 
$\epsilon_0 >0$ depending on $\delta$ such that the metric space $(X,d_\epsilon)$ is 
$D_\epsilon$--uniform for every $0<\epsilon \leq \epsilon_0,$ 
where $k_d$--quasihyperbolic geodesics serve as $D_\epsilon$--uniform curves with $D_\epsilon=D(\delta,\epsilon, \epsilon_0) \geq 1.$ 
Especially, we have a version of the \textit{Gehring--Hayman condition}: there is a constant $D_\epsilon \geq 1$ such that when $\epsilon \leq \epsilon_0,$
\begin{equation}\label{G-H}
\ell_\epsilon([x,y]_{k_d})\leq D_\epsilon\ell_\epsilon(\gamma_{xy})
\end{equation}
for each quasihyperbolic geodesic $[x,y]_{k_d}$ in $X$ and for each curve $\gamma_{xy}$ joining $x$ to $y$ in $X.$ Furthermore, if $(X,k_d)$ is $K$--roughly starlike with respect to the base point $w,$ then by \cite[Lemma 4.17]{BHK} we have that
\begin{equation}\label{distance vs rho}
\begin{split}
\frac{1}{\epsilon e}\sigma_\epsilon(x)d(x)&=\frac{1}{\epsilon e}\rho_\epsilon(x)\leq \de(x)\\
&\leq \frac{2\exp\{\epsilon K\}-1}{\epsilon}\rho_\epsilon(x)\\
&=\frac{2\exp\{\epsilon K\}-1}{\epsilon}\sigma_\epsilon(x)d(x)
\end{split}
\end{equation}
for all $\epsilon >0$ and every $x\in X.$
Thus there exists $c=c(\delta,K) \in (0,1)$ such that
\begin{equation}\label{quasihypmetrics}
c\epsilon k_d(x,y)\leq k_\epsilon(x,y)\leq e\epsilon k_d(x,y),
\end{equation}
for all $x,y \in X$ and every $0<\epsilon \leq \epsilon_0,$ where $k_\epsilon$ is the 
quasihyperbolic metric derived from $\de.$ Moreover, we obtain from 
\cite[Theorem 6.39]{BHK} that Whitney type balls in $(X,\de)$ are also Whitney type balls 
in $(X,d).$ To be more specific, let 
\begin{equation}\label{constant C}
C=\max\Bigl\{3\exp\{2\epsilon_0\},\epsilon_0e,\frac{2\exp\{\epsilon_0K\}-1}{\epsilon_0}\Bigr\}.
\end{equation}
 Then 
\begin{equation}\label{WB1}
B_\epsilon(z,\epsilon\de(z))\subset B_d(z,\epsilon C^2d(z)) \tag*{(WB1)}
\end{equation}
whenever $z\in X$ and $0<\epsilon\leq\min\{\epsilon_0,\frac{1}{2C^2}\}.$ 
Furthermore, if $(X,d)$ is a $D$--quasiconvex space, then Whitney type balls in $(X,d)$ 
are also Whitney type balls in $(X,\de):$
\begin{equation}\label{WB2}
B_d(z,\epsilon d(z))\subset B_\epsilon(z,\epsilon D C^2\de(z)) \tag*{(WB2)}
\end{equation}
whenever $z\in X$ and $0 < \epsilon \leq \min\{\epsilon_0,\frac{1}{8D}\}.$

Moreover, if $(X,d,\mu)$ is $Q$--regular on Whitney type balls and $Q$--upper regular, 
$(X,\de,\mue)$ is $Q$--regular on Whitney type balls, 
when $\epsilon \leq \min\{\epsilon_0,\frac{1}{2C^2}\}.$ 
Indeed, let $z\in X$ and let $r\leq \epsilon \de(z)\leq \frac{1}{2}\de(z).$ 
From \ref{WB1} we obtain that 
\begin{equation}\label{balls}
B_\epsilon(z,r) \subset B_\epsilon(z,\epsilon \de(z))\subset B_d(z,\epsilon C^2d(z)) \subset B_d(z,d(z)/2),
\end{equation}
and by HI($A$) it follows that 
$B_\epsilon(z,r)\subset B_d\Bigl(z,\frac{A}{\sigma_\epsilon(z)}r\Bigr).$ Now, because $\mu$ is $Q$--upper regular, with HI($A$) it follows that
\begin{equation}\label{upper bound}
\begin{split}
\mu_\epsilon(B_\epsilon(z,r))&= \int_{B_\epsilon(z,r)}(\sigma_\epsilon(u))^Q\, d\mu(u) \\
&\leq (A\sigma_\epsilon(z))^Q\int_{B_\epsilon(z,r)}d\mu(u)\\
&\leq (A\sigma_\epsilon(z))^Q\mu\Bigl(B_d\Bigl(z,\frac{A}{\sigma_\epsilon(z)}r\Bigr)\Bigr)\\
&\leq A^{2Q}C_\text{u}r^Q.
\end{split}
\end{equation}
The lower bound follows similarly: When $(X,d)$ is $D$--quasiconvex, by HI($A$) we have that $B_d(z,\frac{1}{A D\sigma_\epsilon(z)}r)\subset B_\epsilon(z,r).$ Thus HI($A$) together with 
\eqref{locally regular} yields 
\begin{equation}\label{lower bound}
\begin{split}
\mu_\epsilon(B_\epsilon(z,r))& = \int_{B_\epsilon(z,r)}(\sigma_\epsilon(u))^Q\, d\mu(u)\\
&\geq \Bigl(\frac{\sigma_\epsilon(z)}{A}\Bigr)^Q\int_{B_\epsilon(z,r)}d\mu(u)\\
&\geq \Bigl(\frac{\sigma_\epsilon(z)}{A}\Bigr)^Q\mu\Bigl(B_d\Bigl(z,\frac{1}{A D\sigma_\epsilon(z)}r\Bigr)\Bigr)\\
&\geq \Bigl(\frac{1}{A}\Bigr)^{2Q}\frac{1}{D^QC_\text{w}}r^Q.
\end{split}
\end{equation}

Let $Q>1$ and let $(X,d,\mu)$ be a minimally nice $D$--quasiconvex and $Q$--upper 
regular space so that the measure $\mu$ is $Q$--regular on Whitney type balls, and 
$(X,k_d)$ is a $K$--roughly starlike Gromov hyperbolic space. 
Let the constants $\epsilon_0$ and $C$ be as in the paragraph containing
\eqref{G-H} and \eqref{constant C}.
We may define a Whitney covering of the space $(X,\de,\mue)$ 
when $\epsilon\leq \min\{\epsilon_0,\frac{1}{8D},\frac{1}{2C^2}\},$ see e.g.\ 
\cite[Theorem III.1.3]{CW}, \cite[Lemma 2.9]{MaSe}, \cite[Lemma 7]{HKT} and \cite[\S 3]{KL}.

Let $r(z)=\epsilon \de(z)/50.$ From the family $\{B_\epsilon(z,r(z))\}_{z\in X}$ of balls we 
select a maximal (countable) subfamily $\{B_\epsilon(z_i,r(z_i)/5)\}_{i\in I}$ of pairwise 
disjoint balls. We write $\WC = \{B_i\}_{i\in I},$ where $B_i=B_\epsilon(z_i,r_i)$ 
and $r_i=r(z_i).$ We call the family $\WC$ the \textit{Whitney covering} of $(X,\de).$ 
We list the basic properties of the Whitney covering in Lemma \ref{properties}. As in 
\cite[Lemma 3.2]{KL}, the property \eqref{the property} is a consequence of the 
$Q$--regularity condition of $\mue$ on Whitney type balls, and the property (v) follows 
from the proof of Lemma 3.2 in \cite{KL} via uniformity and $Q$--regularity condition of $\mue$ on Whitney type balls.

\begin{lemma}\label{properties}
There is $N \in \N$ such that
\renewcommand{\theenumi}{\roman{enumi}}
\renewcommand{\labelenumi}{\textnormal{(\theenumi)}}
\begin{enumerate}
\item the balls $B_\epsilon(z_i,r_i/5)$ are pairwise disjoint,\\
\item $X = \bigcup_{i\in I}B_\epsilon(z_i,r_i),$\\
\item $B_\epsilon(z_i,5r_i)\subset X,$\\
\item $\sum_{i=1}^\infty \chi_{B_\epsilon(z_i,5r_i)}(x)\leq N$ for all $x\in X.$ \label{the property}
\end{enumerate}

Furthermore, suppose $\de(x,y)\geq \de(x)/2$ and let $\gamma_{xy}$ be a $D_\epsilon$--uniform 
curve joining $x$ to $y.$ There exists a constant $C_{\epsilon}>0,$ that depends 
quantitatively on $\epsilon$ and the hypotheses, 
such that
\begin{enumerate}
\setcounter{enumi}{4}
\item $C_{\epsilon}^{-1}N_{\epsilon}(x,y)\leq \ell_\epsilon(\gamma_{xy})\leq C_{\epsilon}N_\epsilon(x,y),$ \label{original number of balls}
\end{enumerate}
where $N_\epsilon(x,y)$ is the number of balls $B\in \WC$ intersecting $\gamma_{xy}.$
\end{lemma}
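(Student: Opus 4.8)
The plan is to prove Lemma \ref{properties}, focusing on the properties involving the Whitney covering. Properties (i)--(iv) are standard consequences of the covering construction, while property (v) is the substantive claim. Let me sketch both.

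For the covering properties, I would argue as follows. Property (i) is immediate by our selection of a maximal pairwise disjoint subfamily $\{B_\epsilon(z_i,r_i/5)\}$. Property (ii) follows from maximality: if some $x\in X$ were not covered by $\bigcup_i B_\epsilon(z_i,r_i)$, then I would show that $B_\epsilon(x,r(x)/5)$ is disjoint from all the selected balls $B_\epsilon(z_i,r_i/5)$, contradicting maximality. This requires a comparison of radii $r(x)$ and $r(z_i)$ for nearby centers, which comes from the quasi-convexity of the function $z\mapsto \de(z)$ along $\epsilon$-geodesics (a Lipschitz-type estimate for $\de$ in the $\de$-metric, following from the definition of $\sigma_\epsilon$ and \eqref{distance vs rho}). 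Property (iii) follows from the relation \ref{WB1} between Whitney balls in $(X,\de)$ and $(X,d)$, since $5r_i\leq \epsilon\de(z_i)/10<\epsilon\de(z_i)$ keeps us inside a genuine Whitney ball of $(X,d)$, hence inside $X$.

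Property (iv), the bounded overlap, is the key to the counting argument. The plan is to use the $Q$-regularity of $\mue$ on Whitney type balls, which holds by \eqref{upper bound} and \eqref{lower bound} once $\epsilon\leq\min\{\epsilon_0,1/(8D),1/(2C^2)\}$. If $x\in B_\epsilon(z_i,5r_i)$ for several indices $i$, then by the radius-comparison estimate the balls $B_\epsilon(z_i,r_i/5)$ all have comparable radii and all lie in a fixed ball $B_\epsilon(x,C'r(x))$ for a controlled constant $C'$. Since these small balls are pairwise disjoint and each has $\mue$-measure comparable to $r(x)^Q$ by $Q$-regularity on Whitney balls, while their union has $\mue$-measure at most $\mue(B_\epsilon(x,C'r(x)))\lesssim r(x)^Q$, a volume-packing argument bounds the number of overlapping balls by a constant $N$ depending only on the data. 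This is exactly the argument of \cite[Lemma 3.2]{KL}.

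Property (v), the two-sided estimate $C_\epsilon^{-1}N_\epsilon(x,y)\leq \ell_\epsilon(\gamma_{xy})\leq C_\epsilon N_\epsilon(x,y)$, is where the real work lies and I expect it to be the main obstacle. The upper bound $\ell_\epsilon(\gamma_{xy})\leq C_\epsilon N_\epsilon(x,y)$ is the easier direction: I would cover $\gamma_{xy}$ by the Whitney balls it meets, estimate the $\epsilon$-length of the portion of $\gamma_{xy}$ inside each ball by the diameter $2r_i\sim\epsilon\de(z_i)$, and sum, using the bounded overlap (iv) to avoid overcounting. The density $\sigma_\epsilon$ is essentially constant on each Whitney ball by the Harnack inequality, which makes the per-ball $\epsilon$-length comparable to $r_i$. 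The lower bound $C_\epsilon^{-1}N_\epsilon(x,y)\leq \ell_\epsilon(\gamma_{xy})$ is more delicate and is where the hypotheses $\de(x,y)\geq\de(x)/2$ and the uniformity of $\gamma_{xy}$ enter crucially. The difficulty is that the balls $B_i$ that $\gamma_{xy}$ meets may have wildly differing radii, so one cannot simply say each contributes a fixed amount of length. Instead, following the proof of \cite[Lemma 3.2]{KL}, I would use the cigar condition of the $D_\epsilon$-uniform curve to control $\de$ along $\gamma_{xy}$ from below in terms of distance to the endpoints, and the condition $\de(x,y)\geq\de(x)/2$ to ensure the curve genuinely travels a definite $\epsilon$-distance. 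The strategy is to group the intersecting balls by the dyadic scale of their radii and show that, within each scale, the number of balls is controlled by the $\epsilon$-length contributed at that scale (again via $Q$-regularity and bounded overlap), and that the geometric decay of scales along a uniform curve lets one sum a geometric series. Carefully matching the number of balls to the accumulated $\epsilon$-length across all scales, while keeping the constant $C_\epsilon$ quantitative in $\epsilon$ and the structural data, is the technical heart of the argument.
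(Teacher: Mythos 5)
Your handling of (i)--(iv) matches what the paper intends: the paper gives no proof of this lemma, delegating the construction to \cite{CW}, \cite{MaSe}, \cite{HKT}, \cite{KL} and property (iv) to the volume--packing argument of \cite[Lemma 3.2]{KL} via the $Q$--regularity of $\mue$ on Whitney type balls, which is exactly your argument. Two small remarks there: for (ii) the only input needed is that $z\mapsto\de(z)$ is $1$--Lipschitz with respect to $\de$ (triangle inequality), not \eqref{distance vs rho}; and for the right--hand inequality in (v) your step ``estimate the $\epsilon$--length of the portion of $\gamma_{xy}$ inside each ball by the diameter'' is not literally valid (a curve can have length far exceeding the diameter of a ball containing it), but it is repaired by the cigar condition: every point of $\gamma_{xy}\cap 5B_i$ sits at arclength at most $D_\epsilon(1+\epsilon/10)\de(z_i)$ from one of the endpoints, and $\de(z_i)\le\diam_\epsilon(X)\le 2/\epsilon$, so each ball contributes at most a constant $C(\epsilon,D_\epsilon)$ to $\ell_\epsilon(\gamma_{xy})$.

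The genuine gap is in the left--hand inequality $C_\epsilon^{-1}N_\epsilon(x,y)\le\ell_\epsilon(\gamma_{xy})$, and your dyadic--scale plan does not close it. Carry your own scheme out: at the scale where $r_i\approx 2^{-k}$, bounded overlap together with the fact that $\gamma_{xy}\not\subset 5B_i$ (which does follow from $\de(x,y)\ge\de(x)/2$) shows that each such ball forces $\epsilon$--length at least $4r_i\approx 2^{-k}$ of $\gamma_{xy}$ inside $5B_i$, so the number of balls met at scale $k$ is at most $C2^{k}\ell_k$, where $\ell_k$ is the $\epsilon$--length of $\gamma_{xy}$ spent at depths of that order. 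Summing over $k$ gives $N_\epsilon(x,y)\lesssim\sum_k 2^k\ell_k$, which is comparable to the quasihyperbolic length $\ell_{k_\epsilon}(\gamma_{xy})$ (every Whitney ball has $k_\epsilon$--diameter comparable to $\epsilon$), not to $\ell_\epsilon(\gamma_{xy})=\sum_k\ell_k$; the series you hope to sum is weighted by $2^k$ in the wrong direction. This is not an artifact of the method: if $\de(x)=\de(y)=\delta$ while $\de(x,y)$ is comparable to $\diam_\epsilon(X)$ (such pairs exist already for $X$ the unit disk), a $D_\epsilon$--uniform curve from $x$ to $y$ passes through every depth $t\in[\delta,c\,\diam_\epsilon(X)]$ and therefore meets at least one Whitney ball at each dyadic depth, so $N_\epsilon(x,y)\gtrsim\log(1/\delta)$ is unbounded while $\ell_\epsilon(\gamma_{xy})\le D_\epsilon\de(x,y)$ stays bounded. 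So the lower bound in (v) can only be established with $\ell_{k_\epsilon}(\gamma_{xy})$ in place of $\ell_\epsilon(\gamma_{xy})$ (which is what the chain argument of \cite[Lemma 3.2]{KL} actually yields), or under an additional lower bound on $\de$ along the curve; as the statement is written, your proof of that direction cannot be completed.
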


Fix a ball $B_0$ from the Whitney covering $\WC$ and let $z_0$ be its center.  
For each $B_i\in \WC$ we fix a geodesic $[z_0,z_i]_{k_\epsilon}.$ Furthermore, for each $B_i \in \WC$ we set $P(B_i)=\{B \in \WC : B\cap [z_0,z_i]_{k_\epsilon}\neq \emptyset\}$ and define the \textit{shadow} $S(B)$ of a ball $B \in \WC$ by 
\[S(B)=\underset{B\in P(B_i)}{\bigcup_{B_i\in \WC}}B_i.\]
For $k\in \N$ we set
\[\WC_k=\{B_i \in \WC: k\leq k_\epsilon(z_0,z_i)<k+1\}.\]
Similarly as in \cite[Lemma 3.3 and Lemma 3.4]{KL} we may prove, when $\epsilon\leq \min\{\epsilon_0,\frac{1}{8D}, \frac{1}{2C^2}\},$ that there is a constant $C_\text{o}>0$ 
that depends quantitatively on $\epsilon$ and the hypotheses, such that
\begin{equation}\label{shadow overlap}
\sum_{B\in\WC_k}\chi_{S(B)}(x)\leq C_\text{o}.
\end{equation}

\section{The main lemma}

Next we prove a lemma which is the central tool for proving our main theorem. 
From now on we assume that $Q>1,$ $(X,d,\mu)$ is a minimally nice $Q$--upper 
regular $D$--quasiconvex metric measure space such that the measure $\mu$ is $Q$--regular on 
Whitney type balls, and $(X,k_d)$ is a $K$-roughly starlike Gromov hyperbolic space. 
We also assume that $(X,\de,\mue)$ is a deformation of $(X,d,\mu)$ as described above, 
where $w\in X$ is a base point, $\epsilon_0 >0$ is as in the paragraph containing
\eqref{G-H}, $C>1$ as in 
\eqref{constant C} and $0< \epsilon \leq \min\{\epsilon_0, \frac{1}{8D},\frac{1}{2C^2}\}.$

\begin{lemma}\label{main lemma}
Let $u \in X$ be a point and $\gamma \subset X$ be a curve such that
\[\dist_\epsilon(u,\gamma)\leq \min\{C_1\de(u),C_2\diam_\epsilon(\gamma)\}\]
for some $C_1, C_2 >0.$ 
Then there exists a constant $M\ge 1$ that depends quantitatively on $\epsilon$ and the 
constants in our hypotheses, so that
\[\dist_d(u,\gamma)\leq Md(u).\]
\end{lemma}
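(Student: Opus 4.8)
The plan is to translate the hypothesis, which is stated in terms of the deformed metric $d_\epsilon$ and its boundary distance $\delta_\epsilon$, into a statement about the original metric $d$. The key conversion tools are the comparison \eqref{distance vs rho} between $\delta_\epsilon(u)$ and $\rho_\epsilon(u)=\sigma_\epsilon(u)d(u)$, the Harnack inequality HI($A$) for $\sigma_\epsilon$, and the relationship between $d_\epsilon$-distances and $d$-distances on Whitney type balls recorded in \ref{WB1} and \ref{WB2}. First I would handle the local behavior of $\sigma_\epsilon$: since HI($A$) says $\sigma_\epsilon$ varies by at most a factor $A$ on each Whitney ball $B_d(z,\tfrac12 d(z))$, infinitesimal $d_\epsilon$-length and $d$-length are comparable \emph{as long as one stays inside a single Whitney ball}, with the comparison constant governed by $\sigma_\epsilon(u)$. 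The difficulty is that the curve $\gamma$ realizing the small $d_\epsilon$-distance to $u$ need not stay in a single Whitney ball, so a purely local comparison is not enough.

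The heart of the argument is to show that the point of $\gamma$ nearest $u$ in the $d_\epsilon$-metric stays within a fixed multiple of $d(u)$ in the $d$-metric. I would let $p\in\overline\gamma$ be a point (or near-point) with $d_\epsilon(u,p)\le\dist_\epsilon(u,\gamma)$ and argue as follows. The bound $\dist_\epsilon(u,\gamma)\le C_1\delta_\epsilon(u)$ together with \eqref{distance vs rho} gives $d_\epsilon(u,p)\lesssim \rho_\epsilon(u)/\epsilon=\sigma_\epsilon(u)d(u)/\epsilon$. I would then connect $u$ to $p$ by a curve whose $d_\epsilon$-length is comparable to $d_\epsilon(u,p)$ and argue that this curve cannot leave a controlled $d$-neighborhood of $u$. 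Concretely, if the curve from $u$ to $p$ were to exit $B_d(u,\tfrac12 d(u))$, then by HI($A$) the density $\sigma_\epsilon$ is bounded below by $\sigma_\epsilon(u)/A$ along the initial portion inside the ball, forcing the $d_\epsilon$-length of that portion to be at least of order $\sigma_\epsilon(u)d(u)/A$; choosing constants correctly this contradicts the smallness of $d_\epsilon(u,p)$ unless we rescale, so in general one shows the $d$-diameter of the connecting curve is at most a fixed multiple $M_0 d(u)$.

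The second role of the hypothesis, namely $\dist_\epsilon(u,\gamma)\le C_2\diam_\epsilon(\gamma)$, is needed to rule out the degenerate case where $\gamma$ is so $d_\epsilon$-small that the near-point estimate is vacuous or where $u$ could be close to $\partial_d X$ while $\gamma$ shrinks; this comparison keeps $\gamma$ from collapsing relative to its distance from $u$ and lets one transfer the estimate to $\dist_d(u,\gamma)$ rather than merely to the distance to a single point. Combining the two, I would conclude that the $d$-distance from $u$ to $\gamma$ is controlled: $\dist_d(u,\gamma)\le d_d(u,p)\le M d(u)$, where $M$ absorbs $A$, $C_1$, $C_2$, the uniformity constant $D_\epsilon$, and the constant $C$ from \eqref{constant C}.

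I expect the main obstacle to be the passage from a \emph{pointwise} Harnack comparison of the densities to a \emph{global} comparison of the two distances along a curve that may wander through several Whitney balls. The natural remedy is to use the uniformity of $(X,d_\epsilon)$ from \cite{BHK} (the cigar condition controlling $\delta_\epsilon$ along quasihyperbolic geodesics) together with \ref{WB1}--\ref{WB2}, which are precisely the statements guaranteeing that Whitney balls in the two metrics are comparable; these allow one to patch the local estimates. Care is needed because $d_\epsilon$ is bounded while $d$ may not be, so the estimate must be genuinely local near $u$ and cannot rely on any global diameter bound in the $d$-metric.
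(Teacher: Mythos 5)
There is a genuine gap, and it lies at the very heart of your argument. You propose to take a near‑point $p\in\gamma$ with $d_\epsilon(u,p)\le \dist_\epsilon(u,\gamma)\le C_1 d_\epsilon(u)$, join $u$ to $p$ by a curve of comparable $d_\epsilon$--length, and then use the Harnack inequality to argue that this curve cannot leave a controlled $d$--neighbourhood of $u$. But $d_\epsilon(u)$ is the \emph{full} distance from $u$ to the boundary $\partial_\epsilon X$, so for $C_1\ge 1$ the ball $B_\epsilon(u,C_1 d_\epsilon(u))$ is not a Whitney type ball: it meets the boundary, and \ref{WB1}--\ref{WB2} and HI($A$) say nothing about it. A connecting curve of $d_\epsilon$--length $\approx C_1 d_\epsilon(u)$ can, and in general will, exit $B_d(u,\tfrac12 d(u))$ and travel close to $\partial_d X$, where the local Harnack comparison of $\sigma_\epsilon$ with $\sigma_\epsilon(u)$ breaks down entirely; your parenthetical ``choosing constants correctly \dots so in general one shows the $d$--diameter of the connecting curve is at most a fixed multiple $M_0 d(u)$'' is precisely the assertion that has no proof in your outline. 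The danger the lemma must rule out is a curve $\gamma$ that dips near the boundary on the ``other side'' of $\hat u$, coming $d_\epsilon$--close to $u$ while staying $d$--far from it, and no purely metric/local argument excludes this. A symptom of the problem is that your proof never uses the measure $\mu$, the $Q$--upper regularity, the $Q$--regularity on Whitney balls, or the hypothesis $Q>1$, all of which are essential in the statement's setting.

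The paper's proof is of a completely different, measure--theoretic nature: assuming $\dist_d(u,\gamma)>Md(u)$, it shows that every quasihyperbolic geodesic $[u,y]_{k_d}$ with $y\in\gamma$ must cross enough Whitney balls of the covering $\WC$ that $\sum_{i\in I_y}\diam_d(B_i)/\dist_d(B_i,\hat u)\gtrsim \log(M-1)$ (a telescoping estimate against $\int ds/d(z,\hat u)$), then integrates this over $\gamma$ against a Frostman measure $\nu$, and applies H\"older's inequality with exponent $Q$. The first factor is bounded by $\log M$ using $Q$--upper regularity and the annular decomposition $A_n$; the second is summable thanks to the geometric decay \eqref{estimate} of $d_\epsilon(z_i)$ in $k$ (this is where the hypothesis $\dist_\epsilon(u,\gamma)\le C_2\diam_\epsilon(\gamma)$ enters quantitatively, not merely to exclude a degenerate case) and the shadow overlap bound \eqref{shadow overlap}. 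The outcome $1\lesssim \log M/(\log(M-1))^Q$ bounds $M$ only because $Q>1$. None of this machinery appears in your proposal, and without it the claimed local patching argument does not close.
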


\begin{proof}
Suppose that for a fixed $M> 2$ there is $u\in X$ and a curve $\gamma$ such that 
$\dist_\epsilon(u,\gamma)\leq \min\{C_1\de(u),C_2\diam_\epsilon(\gamma)\}$ but  
$\dist_d(u,\gamma) > Md(u).$ We will show that such an $M$ has an upper bound in terms
of our data. Towards this end, let us fix $u$ and $\gamma$ as above. By replacing $\gamma$
with a suitable subcurve of $\gamma$ we may assume without loss of generality that 
$$\gamma\subset B_{\epsilon}(u,2Cd_{\epsilon}(u)),$$
where $C=\max\{C_1,C_1/C_2\}.$

Let $\WC$ be a Whitney covering of $(X,\de,\mue)$ as in Section 3.
We choose $B_\epsilon(u,r(u))$ as the fixed ball $B_0\in \WC.$
Let $\hat{u}\in\partial_dX$ be such that $d(u)=d(u,\hat{u}).$
Let $y\in \gamma$ and let $[u,y]_{k_d}$ be a $k_d$--quasihyperbolic geodesic joining $u$ to 
$y.$ Moreover, let $I_y, J_y\subset \N$ be index sets so that 
\[I_y=\{i\in \N : B_i\in\WC,\, [u,y]_{k_d}\cap 5B_i \cap B_d(u,Md(u)) \neq \emptyset\}\] 
and 
\[J_y=\{j\in \N : B_j\in\WC,\, [u,y]_{k_d}\cap B_j \cap B_d(u,Md(u)) \neq \emptyset\}.\] 
Let $\tilde{y}\in [u,y]_{k_d}$ be the first point in $[u,y]_{k_d}$ such that 
$\tilde{y}\notin B_d(u,Md(u)).$ Thus $[u,\tilde{y}]_{k_d}$ is a subcurve of 
$[u,y]_{k_d}$ in $\overline B_d(u,Md(u)).$  
Let $[u_i,y_i]_{k_d}\subset [u,y]_{k_d},$ $i\in I_y,$ be a subcurve of $[u,y]_{k_d}$ 
such that 
$u_i$ is the first point and $y_i$ is the last point, where $[u,y]_{k_d}$ intersects 
$\overline 5B_i.$ Moreover, let 
$\alpha_j \subset [u_j,y_j]_{k_d} \cap \overline 5B_j,$ $j\in J_y,$ be the 
maximal connected component of $[u,y]_{k_d}$ in $\overline 5B_j$ such that 
$\alpha_j\cap \overline B_j \neq \emptyset.$

We define $g\colon X\to (0,\infty)$ by setting
\[g(x)=\sum_{B \in \WC}\frac{\diam_d(B)}{\log(M-1)\dist_d(B,\hat{u})\diam_\epsilon(B)}\chi_{5B}(x).\]
Because $k_d$--quasihyperbolic geodesics are $D_\epsilon$--uniform curves in $(X,\de),$ 
we obtain that
\begin{align}\label{g-length1}
\int_{[u,\tilde{y}]_{k_d}}g\, d_\epsilon s &\le \frac{1}{\log(M-1)}\sum_{i \in I_y}\frac{\diam_d(B_i)}{\dist_d(B_i,\hat{u})\diam_\epsilon(B_i)}\ell_\epsilon([u_i,y_i]_{k_d})\notag\\  
&\le \frac{5D_\epsilon}{\log(M-1)}\sum_{i \in I_y}\frac{\diam_d(B_i)}{\dist_d(B_i,\hat{u})}.
\end{align}
Here $d_\epsilon s=\sigma_\epsilon (z)\, ds$ is the length element in the metric $\de.$ 

Moreover, by \ref{WB2} and Lemma \ref{properties} \eqref{the property} we obtain that
\begin{equation}\label{g-length2}
\begin{split}
\int_{[u,\tilde{y}]_{k_d}}g\, d_\epsilon s &\geq \frac{1}{N\log(M-1)}\sum_{j\in J_y}\frac{\diam_d(B_j)}{\dist_d(B_j,\hat{u})\diam_\epsilon(B_j)}\ell_\epsilon(\alpha_j)\\
&\geq \frac{2}{N\log(M-1)}\sum_{j\in J_y} \frac{\diam_d(B_j)}{\dist_d(B_j,\hat{u})}\\ 
&\geq \frac{2\epsilon}{25C^2DN\log(M-1)} \sum_{j\in J_y}\frac{d(z_j)}{\dist_d(B_j,\hat{u})}.
\end{split}
\end{equation}

Let $j\in J_y.$ Because $\diam_{k_\epsilon}(B_j)\leq \frac{2\epsilon}{50-\epsilon},$ 
using \ref{WB1} and \eqref{quasihypmetrics}, we obtain the estimate 
\begin{equation}
\begin{split}
\int_{[u,y]_{k_d}\cap B_j} ds &= \int_{[u,y]_{k_d}\cap B_j} \frac{d(z_j)}{d(z_j)}\, ds\\ 
&\leq \frac{50+\epsilon C^2}{50}d(z_j)\diam_{k_d}(B_j)\\ 
&\leq \frac{50+\epsilon C^2}{50}d(z_j)\frac{1}{c\epsilon}\diam_{k_\epsilon}(B_j) \\
&\leq \frac{50+\epsilon C^2}{25c}\frac{1}{50-\epsilon}d(z_j).
\end{split}
\end{equation}
Combining this with \eqref{g-length2} and \eqref{g-length1} we have that
\begin{align}\label{g-length final}
\frac{5D_\epsilon}{\log(M-1)}&\sum_{i \in I_y}\frac{\diam_d(B_i)}{\dist_d(B_i,\hat{u})}\notag\\
&\geq \frac{2c\epsilon}{C^2DN\log(M-1)}\frac{50-\epsilon}{50+\epsilon C^2} \sum_{j\in J_y} \int_{[u,y]_{k_d}\cap B_j} \frac{1}{\dist_d(B_j,\hat{u})}\, ds\notag\\
&\geq\frac{2c\epsilon}{C^2DN\log(M-1)}\frac{50-\epsilon}{50+\epsilon C^2}\int_{[u,\tilde{y}]_{k_d}} \frac{1}{d(z,\hat{u})}\, ds\\
&\geq\frac{2c\epsilon}{C^2DN\log(M-1)}\frac{50-\epsilon}{50+\epsilon C^2}\log \Bigl(\frac{d(\tilde{y},\hat{u})}{d(u,\hat{u})}\Bigr) \notag\\
&\geq \frac{2c\epsilon}{C^2DN}\frac{50-\epsilon}{50+\epsilon C^2}.\notag
\end{align}
Now, from \eqref{g-length final} we obtain the estimate
\begin{equation}\label{estimate for one}
1 \le 
 \frac{P}{\log(M-1)}\sum_{i\in I_y}\frac{\diam_d(B_i)}{\dist_d(B_i,\hat{u})},
\end{equation}
where $P \geq 1$ is a constant that depends on $\epsilon$ and the constants of the hypotheses.

Let $\nu$ be a Radon measure on $\gamma \subset (X,\de)$ given by Frostman's lemma 
(cf.\ \cite{Ma} and \cite[Theorem 4.1]{KL}) so that
\begin{equation}\label{Frostman}
\begin{split}
\nu(E)&\leq \diam_\epsilon (E) \quad \text{ for every } E\subset \gamma, \text{ and} \\
\nu(\gamma)&\geq \frac{1}{30}\frac{\diam_\epsilon (\gamma)}{2}.
\end{split}
\end{equation}
For every $y\in \gamma$ we set 
\[S_y=\{x\in \gamma : x\in \bigcup_{i\in I_y}S(B_i)\}.\] 
We may choose a finite number of points $y_n\in \gamma$ such that 
$\gamma \subset \bigcup_n S_{y_n}.$ Hence, using \eqref{estimate for one}, 
 Fubini's theorem and H\"older's inequality we obtain that
\begin{equation}\label{start}
\begin{split}
\nu(\gamma)&=\int_\gamma d\nu \leq \frac{P}{\log(M-1)}\int_\gamma \sum_{i \in I_y}\frac{\diam_d(B_i)}{\dist_d(B_i,\hat{u})}\, d\nu \\
&\leq \frac{P}{\log(M-1)}\sum_{n} \sum_{i\in I_{y_n}}\frac{\diam_d(B_i)}{\dist_d(B_i,\hat{u})}\nu(S(B_i)\cap \gamma)\\
&\leq \frac{P}{\log(M-1)}\sum_{k=0}^\infty \underset{i\in I_y,\, y\in \gamma}{\sum_{B_i\in\WC_k}}\frac{\diam_d(B_i)}{\dist_d(B_i,\hat{u})}\nu(S(B_i)\cap \gamma)\\
&\leq \frac{P}{\log(M-1)}\Bigl(\sum_{k=0}^\infty \underset{i\in I_y,\, y\in \gamma}{\sum_{B_i\in \WC_k}}\Bigl(\frac{\diam_d(B_i)}{\dist_d(B_i,\hat{u})}\Bigr)^Q\Bigr)^{\frac{1}{Q}} \\
& \qquad \qquad \qquad \quad \Bigl(\sum_{k= 0}^\infty \underset{i\in I_y,\, y\in \gamma}{\sum_{B_i\in \WC_k}}(\nu(S(B_i)\cap \gamma))^{\frac{Q}{Q-1}}\Bigr)^{\frac{Q-1}{Q}}. 
\end{split}
\end{equation}

Let us first estimate the first double sum in \eqref{start}. 
Let 
\[A_n=\Bigl(\overline{B}_d(\hat{u},2^nd(u))\setminus B_d(\hat{u},2^{n-1}d(u))\Bigr) \cap X.\] 
Pick an integer $m$ with $2^{m-1}< M+1 \leq 2^m.$ 
Write $\tilde{B}=B_d(z,\frac{\epsilon C^2}{50}d(z))$ for $B=B_{\epsilon}(z,r(z))\in \WC.$ 
Given $B\in \WC,$ \ref{WB1} ensures that $B\subset \tilde{B}.$  
Moreover, if $B\cap A_n\neq \emptyset$ for some $n\in \Z,$ 
then $\tilde{B}\subset A_{n-1}\cup A_n\cup A_{n+1}.$ Thus, 
by the $Q$--upper regularity condition, the $Q$--regularity condition on 
Whitney type balls in $(X,d)$ and Lemma \ref{properties} \eqref{the property} we deduce that 
\begin{align}\label{sums1}
\sum_{k=0}^\infty \underset{i\in I_y,\, y\in \gamma}{\sum_{B_i\in \WC_k}}\Bigl(\frac{\diam_d(B_i)}{\dist_d(B_i,\hat{u})}\Bigr)^Q &\leq \sum_{n=0}^{m} \underset{B\cap A_n \neq \emptyset}{\sum_{B\in \WC}}\Bigl(\frac{\diam_d(B)}{\dist_d(B,\hat{u})}\Bigr)^Q \notag\\
&\leq \sum_{n=0}^{m} \underset{B\cap A_n \neq \emptyset}{\sum_{B\in \WC}}\frac{2^QC_\text{w}\mu(\tilde{B})}{(\dist_d(B,\hat{u}))^Q}\notag\\
&\leq 2^QC_\text{w}^3(5 DC^4)^Q\sum_{n=0}^m\frac{\mu(B_d(\hat{u},2^{n+1}d(u))\cap X)}{(2^{n-2}d(u))^Q}\\
&\leq 2^{4Q}C_\text{w}^3C_\text{u}(5 DC^4)^Q (m+1)\notag\\
&\leq 2^{4(Q+1)}C_\text{w}^3C_\text{u}(5 DC^4)^Q \log M.\notag 
\end{align}

Above, in moving from the second line to the third, we used the pairwise disjointness of the
balls $\frac 1 5 B,$ (WB2) and the $Q$--regularity of $\mu$ on Whitney type balls.

Let us then estimate the second double sum in \eqref{start}.
Let $B_i\in \WC,$ where $i \in I_y$ with $y\in \gamma,$ and let $z_i\in B_i$ be its 
center. 
Because $k_d$--quasihyperbolic geodesics are $D_{\epsilon}$--uniform curves and
$\dist_\epsilon(u,\gamma)\leq \min\{C_1\de(u),C_2\diam_\epsilon(\gamma)\},$ 
we conclude that 
\begin{align*}
\de(z_i)&\leq \de(u)+D_\epsilon \dist_\epsilon(u,\gamma) + \frac{\epsilon}{10} \de(z_i)\\
& \leq (1+D_\epsilon C_1)\de(u) + \frac{\epsilon}{10}\de(z_i),
\end{align*}
and thus 
\begin{align}\label{distance1}
\de(z_i) \leq \frac{10}{10-\epsilon}(1+D_\epsilon C_1)\de(u).
\end{align}
We also know that $B_\epsilon(u,\epsilon \de(u))\subset B_d(u,\frac 1 2 d(u))
\subset B_d(u,Md(u))$  and $\gamma\cap B_d(u,Md(u))=\emptyset.$  Especially
$\gamma\cap B_{\epsilon}(u,\epsilon d_{\epsilon}(u))=\emptyset$ and hence
\begin{align*}
\de(z_i) &\leq \de(u) + D_\epsilon \dist_\epsilon (u,\gamma) + \frac{\epsilon}{10}\de(z_i)\\
& \leq \frac{1}{\epsilon}\dist_\epsilon (u,\gamma) + D_\epsilon \dist_\epsilon(u,\gamma) + \frac{\epsilon}{10}\de(z_i).
\end{align*}
Thus
\begin{align}\label{distance2}
\de(z_i)\leq \frac{10}{10-\epsilon}\frac{1+\epsilon D_\epsilon }{\epsilon} C_2 \diam_\epsilon(\gamma).
\end{align}
If $B_i\in \WC_k,$ 
since $(X,\de)$ is a $D_\epsilon$--uniform space, by \cite[Lemma 2.13]{BHK} we have that 
\begin{equation}\label{distance 3}
k\leq k_\epsilon(u,z_i)\leq 4D_\epsilon^2\log\Bigl(1+\frac{\de(u,z_i)}{\min\{\de(u),\de(z_i)\}}\Bigr).
\end{equation}
Moreover, since $k_d$--quasihyperbolic geodesics are $D_\epsilon$--uniform curves
in $(X,d_\epsilon)$, \eqref{distance2} implies that
\begin{align*}
\de(u,z_i)&\leq D_\epsilon\dist_\epsilon(u,\gamma) + \frac{\epsilon}{10}\de(z_i)\\
&\leq \Bigl(D_\epsilon+\frac{1+\epsilon D_\epsilon}{10-\epsilon}\Bigr)C_2 \diam_\epsilon(\gamma),
\end{align*} 
and thus \eqref{distance1}, \eqref{distance 3} and this give us when $k \ge 4D_\epsilon ^2$ that
\begin{equation} \label{estimate}
\begin{split}
\de(z_i) &\leq \frac{10}{10-\epsilon}(1+D_\epsilon C_1)\min\{\de(u),\de(z_i)\}\\
&\leq \frac{10}{10-\epsilon}(1+D_\epsilon C_1)\frac{\de(u,z_i)}{\exp\{\frac{k}{4D_\epsilon^2}\}-1}\\
&\le 2^{\frac{-k}{4D_\epsilon^2}}\frac{2(10+10D_\epsilon C_1)(10D_\epsilon +1)}{(10-\epsilon)^2}C_2 \diam_\epsilon(\gamma).
\end{split}
\end{equation}

Because $(X,\de)$ is a $D_\epsilon$--uniform space, and 
$\gamma\subset B_{\epsilon}(u,2Cd_{\epsilon}(u)),$ it easily follows that there is a constant
$C_{\text{s}}$ only depending on $D_\epsilon$ and $C$ so that
\begin{equation}\label{size of the shadow}
\diam_\epsilon(S(B_i)\cap \gamma)\leq C_\text{s} \diam_\epsilon(B_i),
\end{equation}
for each $y\in \gamma$ and every $i\in I_y.$
Now, inequalities \eqref{shadow overlap}, \eqref{Frostman}, \eqref{size of the shadow}, 
\eqref{distance2} and \eqref{estimate} yield
\begin{align}\label{sums2}
\sum_{k=0}^\infty \underset{i\in I_y, y\in \gamma}{\sum_{B_i\in \WC_k}}(\nu(S(B_i)\cap \gamma))^{\frac{Q}{Q-1}}& 
\leq \sum_{k=0}^\infty \underset{i\in I_y, y\in\gamma}{\max_{B_i\in \WC_k}}(\nu(S(B_i)\cap\gamma))^{\frac{1}{Q-1}} \underset{i\in I_y, y\in \gamma}{\sum_{B_i\in \WC_k}}\nu(S(B_i)\cap \gamma)\notag\\
&\leq C_\text{o}\nu(\gamma)\sum_{k=0}^\infty ( \underset{i\in I_y, y\in \gamma}{\max_{B_i\in \WC_k}}(\diam_\epsilon(S(B_i)\cap\gamma)))^{\frac{1}{Q-1}}\notag\\
&\leq C_\text{o}\Bigl(\frac{\epsilon C_\text{s}}{25}\Bigr)^{\frac{1}{Q-1}}\nu(\gamma)\sum_{k=0}^\infty( \underset{i\in I_y, y \in \gamma}{\max_{B_i\in \WC_k}}(\de(z_i)))^{\frac{1}{Q-1}}\\
&\leq C' \nu(\gamma)(\diam_\epsilon (\gamma))^{\frac{1}{Q-1}},\notag
\end{align}
where $C' >0$ is a constant depenging on $\epsilon$ and the hypotheses.

Combining \eqref{sums1} and \eqref{sums2} with \eqref{start} we obtain
\begin{equation*}
\nu(\gamma)\leq C'' \frac{(\log M)^{\frac{1}{Q}}}{\log(M-1)}\nu(\gamma)^{\frac{Q-1}{Q}}(\diam_\epsilon(\gamma))^{\frac{1}{Q}}. 
\end{equation*}
Inserting \eqref{Frostman} we conclude that
\begin{equation*}
1\leq 60 (C'')^Q  \frac{\log(M)}{(\log(M-1))^Q}.
\end{equation*}
This gives the desired upper bound on $M$ and the claim follows.
\end{proof}

\section{Proof of Theorem \ref{main}}
We begin by proving the following theorem.

\begin{teor}\label{main theorem}
Let $Q>1$ and let $(X,d,\mu)$ be a minimally nice $Q$--upper regular $D$--quasiconvex space 
such that the measure $\mu$ is $Q$--regular on Whitney type balls. 
Suppose that $(X,k_d)$ is a $K$--roughly starlike Gromov hyperbolic space. 
Then $(X,d)$ satisfies both the Gehring--Hayman condition and the ball separation condition.
\end{teor}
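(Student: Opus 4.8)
The plan is to establish both conditions for $(X,d)$ by pushing down the corresponding facts from the deformed space $(X,\de)$. Recall that for $\epsilon\le\epsilon_0$ the space $(X,\de)$ is $D_\epsilon$--uniform and that the $k_d$--quasihyperbolic geodesics serve as $D_\epsilon$--uniform curves; in particular both target conditions already hold in the $\epsilon$--metric, ball separation coming at once from the cigar condition and the Gehring--Hayman inequality being exactly \eqref{G-H}. The device that converts $\epsilon$--proximity into $d$--proximity is Lemma \ref{main lemma}, so my strategy is to verify the $\epsilon$--hypotheses of that lemma and then read off the conclusion in the metric $d$. Throughout I fix $0<\epsilon\le\min\{\epsilon_0,\frac{1}{8D},\frac{1}{2C^2}\}$.

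Ball separation is the clean direction. Fix $x,y\in X$, a geodesic $[x,y]_{k_d}$, a point $z\in[x,y]_{k_d}$ and any curve $\gamma_{xy}$, and apply Lemma \ref{main lemma} with $u=z$ and $\gamma=\gamma_{xy}$; I must bound $\dist_\epsilon(z,\gamma_{xy})$ both by $\de(z)$ and by $\diam_\epsilon(\gamma_{xy})$. Since $[x,y]_{k_d}$ is a $D_\epsilon$--uniform curve in $(X,\de)$, the cigar condition produces a subarc from $z$ to one of the two endpoints of $\epsilon$--length at most $D_\epsilon\de(z)$; as that endpoint lies on $\gamma_{xy}$ this gives $\dist_\epsilon(z,\gamma_{xy})\le D_\epsilon\de(z)$. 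Quasiconvexity of the geodesic in $\de$ gives $\dist_\epsilon(z,\gamma_{xy})\le\de(z,x)\le\ell_\epsilon([x,y]_{k_d})\le D_\epsilon\de(x,y)\le D_\epsilon\diam_\epsilon(\gamma_{xy})$, using $x,y\in\gamma_{xy}$. Thus the hypotheses of Lemma \ref{main lemma} hold with $C_1=C_2=D_\epsilon$, and the lemma returns $\dist_d(z,\gamma_{xy})\le Md(z)$, which is the ball separation condition with $C_{\text{bs}}=2M$.

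For the Gehring--Hayman condition the essential input is \eqref{G-H}, and the work is to turn that bound on $\epsilon$--length into one on $d$--length. I would argue through the Whitney covering $\WC$ of $(X,\de)$: a $k_d$--geodesic meets each Whitney ball a bounded number of times and, by \ref{WB1}, each ball has $d$--diameter comparable to $d(z_i)$, so $\ell_d([x,y]_{k_d})\lesssim\sum_i d(z_i)$ over the balls met by the geodesic, while $\ell_d(\gamma_{xy})$ is bounded below by a bounded--overlap sum of the $d(z_j)$ over balls met by $\gamma_{xy}$. The ball separation condition just proved forces $\gamma_{xy}$ into a controlled neighborhood of each geodesic ball; organizing the comparison by the levels $\WC_k$ and invoking the shadow overlap estimate \eqref{shadow overlap} together with the $Q$--regularity of $\mue$ on Whitney balls, one then compares the weighted ball--sum of the geodesic with that of $\gamma_{xy}$. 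This is in spirit a modulus--type estimate parallel to the proof of Lemma \ref{main lemma}, now carried out for $d$--lengths rather than for a single distance.

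The main obstacle I expect is the mismatch of scales. Ball separation only locates $\gamma_{xy}$ within $C_{\text{bs}}d(z)$ of a geodesic point $z$, and since $C_{\text{bs}}\ge 1$ the detected portion of $\gamma_{xy}$ may sit at a far smaller scale than $d(z)$; consequently many geodesic balls, lying at different scales but clustered near a single boundary point $\hat u$, can all be charged to one short arc of $\gamma_{xy}$. Summing over these scales without losing a dimensional constant is exactly the difficulty that the Frostman measure and shadow apparatus behind Lemma \ref{main lemma} are built to defeat, and I anticipate that combining \eqref{shadow overlap} with the $Q$--regularity on Whitney type balls is what makes the multiscale summation converge and closes the Gehring--Hayman direction.
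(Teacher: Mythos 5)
Your treatment of the ball separation condition is correct and essentially identical to the paper's: you verify the hypotheses of Lemma \ref{main lemma} for $u=z$ and $\gamma=\gamma_{xy}$ using the fact that $[x,y]_{k_d}$ is a $D_\epsilon$--uniform curve in $(X,\de)$ (the cigar condition for the bound by $\de(z)$, quasiconvexity together with $x,y\in\gamma_{xy}$ for the bound by $\diam_\epsilon(\gamma_{xy})$), and then read off $\dist_d(z,\gamma_{xy})\le Md(z)$.

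The Gehring--Hayman half, however, contains a genuine gap: you describe a strategy and explicitly leave its crux unresolved. Bounding $\ell_d([x,y]_{k_d})$ above by a sum of $d(z_i)$ over Whitney balls met by the geodesic is easy, but the matching lower bound for $\ell_d(\gamma_{xy})$ is not. Ball separation only places \emph{some} point of $\gamma_{xy}$ within $C_{\text{bs}}d(z)$ of each $z$ on the geodesic, and, as you yourself observe, arbitrarily many geodesic balls at all scales near a single boundary point $\hat u$ may be charged to one short subarc of $\gamma_{xy}$; saying that you ``anticipate'' that the shadow overlap estimate \eqref{shadow overlap} and $Q$--regularity will make the multiscale sum converge is not a proof, and the Frostman/shadow apparatus of Lemma \ref{main lemma} is built to control a single distance, not to bound a length from below. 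What you are attempting to reprove from scratch is in substance the main theorem of \cite{KL}. The paper's actual argument is different and much shorter: it regards $(X,\ell_d)$ (bi--Lipschitz equivalent to $(X,d)$ by $D$--quasiconvexity) as a conformal deformation of the uniform space $(X,\de,\mue)$ by the density $\tilde\rho=(\sigma_\epsilon)^{-1}$, verifies that $\tilde\rho$ satisfies the Harnack inequality HI($A$) (via \ref{WB1}) and the volume growth condition VG($B$) (via $Q$--upper regularity), and then invokes the Gehring--Hayman theorem for conformal deformations \cite[Theorem 1.1]{KL}, whose proof yields $\ell_d(\beta_{xy})\le C_{\text{gh}}\ell_d(\gamma_{xy})$ for every $D_\epsilon$--uniform curve $\beta_{xy}$, hence for every $k_d$--geodesic. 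To close your gap you would either need to carry out this verification and cite \cite{KL}, or reproduce the full length--comparison argument of that paper, which is substantially more involved than your sketch.
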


\begin{proof}
Let us first prove that $(X,d)$ satisfies the Gehring--Hayman condition.
Because $(X,k_d)$ is Gromov hyperbolic and $K$--roughly starlike, $(X,\de,\mue)$ is uniform
 for a deformation as in Section 3 with respect to a base point $w\in X,$ 
where we choose $\epsilon \leq \min\{\epsilon_0,\frac{1}{8D},\frac{1}{2C^2}\},$  where
$\epsilon_0>0$ is 
as in the paragraph containing \eqref{G-H}, $C>1$ as in \eqref{constant C}. 
We know from \eqref{upper bound} and \eqref{lower bound} that the measure 
$\mue$ is $Q$--regular on Whitney type balls. 
We will consider $(X,d,\mu)$ as a conformal deformation of 
$(X,\de,\mue).$

Towards this end, define $\tilde{\rho}\colon (X,\de) \to (0,\infty)$ by setting
\[\tilde{\rho}(z)=\frac{d(z)}{\rho_\epsilon(z)}=(\sigma_\epsilon(z))^{-1}.\] 
First, let us prove that $\tilde{\rho}$ satisfies the Harnack inequality HI($A$) 
with some constant $A\geq 1.$ Let $z\in X$ and $x\in B_\epsilon(z,\epsilon\de(z)).$ 
By inequality \ref{WB1} and the triangle inequality we obtain that
\begin{equation}
\begin{split}
\tilde{\rho}(x)=\frac{d(x)}{\exp\{-\epsilon k_d(w,x)\}}&\leq \frac{(1+\epsilon C^2)d(z)}{\exp\{-\epsilon(k_d(w,z)+1)\}}\\
&\leq\exp\{\epsilon\}(1+\epsilon C^2)\tilde{\rho}(z),
\end{split}
\end{equation}
and 
\begin{equation}
\begin{split}
\tilde{\rho}(x)=\frac{d(x)}{\exp\{-\epsilon k_d(w,x)\}}&\geq \frac{(1-\epsilon C^2)d(z)}{\exp\{-\epsilon(k_d(w,z)-1)\}}\\
&\geq \frac{1-\epsilon C^2}{\exp\{\epsilon\}}\tilde{\rho}(z).
\end{split}
\end{equation}
Thus, for $A=\max\{\exp\{2\epsilon\}(1+\epsilon C^2)^2,\frac{\exp\{2\epsilon\}}{(1-\epsilon C^2)^2}\},$ the density $\tilde{\rho}$ 
satisfies 
\begin{equation}\label{HIA}
A^{-1}\leq \frac{\tilde{\rho}(x)}{\tilde{\rho}(y)}\leq A
\end{equation}
for all $x,y \in B_\epsilon(z,\epsilon \de(z))$ and each $z\in X.$

The density $\tilde{\rho}$ also satisfies the volume growth condition VG($B$) with the 
constant $B=C_\text{u}D^Q.$
Indeed, observe that
\begin{equation}\label{same}
\begin{split}
d_{\tilde{\rho}}(x,y)&=\inf_{\gamma_{xy}}\int_{\gamma_{xy}}\tilde{\rho}(z)\, \de s(z)\\
&=\inf_{\gamma_{xy}}\int_{\gamma_{xy}}(\sigma_\epsilon(z))^{-1}\sigma_\epsilon(z)\, ds(z)\\
&=\inf_{\gamma_{xy}} \ell_d(\gamma_{xy}),
\end{split}
\end{equation}
where the infimum is taken over all curves $\gamma_{xy}$ joining points $x$ and $y.$ 
Since $(X,d)$ is $D$-quasiconvex, it follows that $(X,d_{\tilde{\rho}})$ is bi--Lipschitz 
equivalent to $(X,d)$ and furthermore, 
we have that $B_{\tilde{\rho}}(z,r)\subset B_d(z,Dr)$ for all $z\in X$ and $r>0.$ 
Thus from the $Q$--upper regularity condition \eqref{upper regular} it follows that
\begin{equation}
\begin{split}
\mu_{\tilde{\rho}}(B_{\tilde{\rho}}(z,r))&=\int_{B_{\tilde{\rho}}(z,r)}\tilde{\rho}(x)^Q\, d\mue(x)\\
&=\int_{B_{\tilde{\rho}}(z,r)}(\sigma_\epsilon(x))^{-Q}(\sigma_\epsilon(x))^Q\, d\mu(x)\\
&=\mu(B_{\tilde{\rho}}(z,r))\\
&\leq\mu(B_d(z,Dr))\\&\leq C_\text{u}D^Qr^Q
\end{split}
\end{equation}
for every $z\in X$ and $r>0.$

Hence, $\tilde{\rho}$ is a conformal density. The corresponding deformation of the 
metric space $(X,\de)$ 
with $\tilde{\rho}$ results in an inner metric space $(X,\ell_d)$ which is bi--Lipschitz 
equivalent to the original metric space $(X,d).$ 
We aim to apply \cite[Theorem 1.1]{KL} that gives a Gehring-Hayman condition for 
conformal deformations of certain uniform spaces. As stated in \cite{KL}, 
this theorem applies in
our setting to quasihyperbolic geodesics with respect to the metric $d_\epsilon,$ but not
directly to the geodesics $[x,y]_{k_d}.$ However, the proof in \cite{KL} gives the
estimate
 \begin{equation*}
\ell_d(\beta_{xy})\leq C_{\text{gh}}\ell_d(\gamma_{xy})
\end{equation*}
for each $D_\epsilon$--uniform curve $\beta_{xy}$, with $C_{\text{gh}}$ depending only on $D_\epsilon$ and the 
data associated
to our conformal deformation and $(X,d_{\epsilon},\mu_\epsilon).$ Recalling that each
quasihyperbolic geodesic $[x,y]_{k_d}$ is a $D_{\epsilon}$--uniform curve in $(X,d_{\epsilon})$
we conclude with the desired Gehring--Hayman condition.

Let us then prove that $(X,d)$ satisfies the ball separation condition. Let $x,y \in X,$ $u\in [x,y]_{k_d}$ and $\gamma_{xy} \subset X$ be a curve joining $x$ and $y.$ Let $(X,\de,\mue)$ be the deformation of $(X,d,\mu)$ as before. We may assume that $\ell_\epsilon([x,u]_{k_d})\leq \ell_\epsilon([u,y]_{k_d}).$ Because $[x,y]_{k_d}$ is a $D_\epsilon $--uniform curve in $(X,\de),$ we have that
\begin{equation*}
\dist_\epsilon(u,\gamma_{xy})\leq \ell_\epsilon([x,u]_{k_d})\leq D_\epsilon\de(u),
\end{equation*}
and
\begin{equation*}
\dist_\epsilon(u,\gamma_{xy})\leq \ell_\epsilon([x,u]_{k_d}) 
\leq D_\epsilon\diam_\epsilon(\gamma_{xy}).
\end{equation*}
Thus assumptions of Lemma \ref{main lemma} hold,
and hence there is a constant $C_{\text{bs}}\geq 1,$ depending on $\epsilon$ and the hypotheses, such that
\[\gamma\cap B_d(u,C_{\text{bs}}d(u))\neq \emptyset.\]
\end{proof}

Balogh and Buckley proved in \cite[Theorem 2.4 and Theorem 6.1]{BB} that, for a 
minimally nice length space $(X,d)$ that satisfies both the Gehring--Hayman condition and the
ball separation condition, the associated space $(X,k_d)$ is Gromov hyperbolic. 
Therefore we have the following corollary to Theorem \ref{main theorem}.

\begin{seur}\label{coro}
Let $Q>1$ and let $(X,d,\mu)$ be a minimally nice $Q$--upper regular length space such 
that the measure $\mu$ is $Q$--regular on Whitney type balls. Suppose that $(X,k_d)$ is 
$K$--roughly starlike. Then the quasihyperbolic space $(X,k_d)$ is Gromov hyperbolic if 
and only if $(X,d)$ satisfies both the Gehring--Hayman condition and the ball separation 
condition.
\end{seur}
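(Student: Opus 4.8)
The plan is to read this biconditional as a direct synthesis of two results already established in the excerpt: the necessity of the two geometric conditions is supplied by Theorem \ref{main theorem}, while their sufficiency is the Balogh--Buckley result \cite[Theorem 2.4 and Theorem 6.1]{BB} quoted in the paragraph immediately preceding the statement. So the proof proper will split along the ``if and only if'' and invoke one of these two facts for each implication, the only genuine work being to verify that the hypotheses of the corollary match those of the two cited results.

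For the necessity direction I would assume that $(X,k_d)$ is Gromov hyperbolic and aim to apply Theorem \ref{main theorem}. The hypotheses of that theorem are identical to those of the corollary except that the theorem demands a $D$--quasiconvex space whereas the corollary assumes merely a length space. The key observation is that a length space is automatically $D$--quasiconvex for every $D>1$: since $d(x,y)=\inf_{\gamma_{xy}}\ell_d(\gamma_{xy})$, for any fixed $D>1$ and any pair $x,y$ one can choose a curve whose $d$--length is at most $D\,d(x,y)$, which is precisely a $D$--quasiconvex curve in the sense of Section 2. Fixing, say, $D=2$, all remaining hypotheses ($Q>1$, minimal niceness, $Q$--upper regularity, $Q$--regularity of $\mu$ on Whitney type balls, and $K$--rough starlikeness together with Gromov hyperbolicity of $(X,k_d)$) carry over verbatim, so Theorem \ref{main theorem} applies and yields both the Gehring--Hayman condition and the ball separation condition for $(X,d)$.

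For the sufficiency direction I would assume that $(X,d)$ satisfies both conditions and invoke the Balogh--Buckley result directly. As quoted in the excerpt, \cite[Theorem 2.4 and Theorem 6.1]{BB} asserts that for a minimally nice length space satisfying the Gehring--Hayman and ball separation conditions, the associated quasihyperbolic space $(X,k_d)$ is Gromov hyperbolic. Since the corollary's standing hypotheses include exactly ``minimally nice length space'' and we are assuming the two geometric conditions, this applies with no modification, giving Gromov hyperbolicity of $(X,k_d)$.

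The main (and essentially only) obstacle is bookkeeping rather than mathematics: one must check that the conversion from the length-space hypothesis of the corollary to the $D$--quasiconvexity hypothesis of Theorem \ref{main theorem} is legitimate, and that the auxiliary regularity and rough-starlikeness assumptions are not silently strengthened in passing between the two cited statements. Once the length-space-implies-$D$-quasiconvex observation is recorded, both implications are immediate citations, and the proof closes by combining them.
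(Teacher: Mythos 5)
Your proposal is correct and follows essentially the same route as the paper: the sufficiency direction is the quoted Balogh--Buckley result, and the necessity direction is Theorem \ref{main theorem}, with the observation that a length space is $D$--quasiconvex for any fixed $D>1$ bridging the hypotheses (a point the paper leaves implicit). No gaps.
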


Now we are able to deduce Theorem \ref{main}.\\\\
\begin{proof1}
From \cite[Theorem 3.1]{BB} it follows that $(\Omega,k)$ is $K$--roughly starlike, 
because $(X,d)$ is annularly quasiconvex and $\Omega \subset X$ is a bounded and proper 
subdomain. Hence the claim follows from Corollary \ref{coro}.
\end{proof1}
~\\

\renewcommand\refname{References}

\end{document}